\pdfoutput=1

\documentclass[12pt]{amsart}

\usepackage{color}
\usepackage{graphicx}
\usepackage[mathcal, mathscr]{euscript}
\usepackage{bbm}
\usepackage{float}
\usepackage{amsmath,amsthm,amssymb,amsfonts,amscd,epsfig,latexsym,graphicx,textcomp}
\usepackage{tikz-cd}
\usepackage{psfrag}
\usepackage[all]{xy}
\xyoption{pdf}
\usepackage{stackengine}
\usepackage{comment}
\usepackage{hhline}
\usepackage{diagbox}
\usepackage{float}
\usepackage{caption}
\usepackage{eufrak}
\usepackage{slashed}
\usepackage{amsmath, amssymb, graphics, setspace}

\usepackage{listings}

\usepackage{tikz}
\usepackage{stmaryrd}

\restylefloat{figure}

\usepackage{thmtools}
\usepackage{thm-restate}

\usepackage{hyperref}

\usepackage{cleveref}

\newtheorem{theorem}{Theorem}[section]
\newtheorem{lemma}[theorem]{Lemma}

\newtheorem{question}[theorem]{Question}

\theoremstyle{definition}
\newtheorem{definition}[theorem]{Definition} 
\newtheorem{example}[theorem]{Example} 

\theoremstyle{remark}
\newtheorem{remark}[theorem]{Remark}
\numberwithin{equation}{section}

\usepackage{xpatch}
\makeatletter   
\xpatchcmd{\@tocline}
{\hfil\hbox to\@pnumwidth{\@tocpagenum{#7}}\par}
{\ifnum#1<0\hfill\else\dotfill\fi\hbox to\@pnumwidth{\@tocpagenum{#7}}\par}
{}{}
\makeatother    

\makeatletter
 \def\l@subsection{\@tocline{2}{0pt}{4pc}{6pc}{}}
\def\l@subsubsection{\@tocline{3}{0pt}{8pc}{8pc}{}}
 \makeatother

\begin{document}

\title{A counterexample for the Polar Conjecture of Spencer-Brown}

\thanks{}

\author{Scott Baldridge}
\address{Department of Mathematics, Louisiana State University,
Baton Rouge, LA}
\email{baldridge@math.lsu.edu}

\author{Louis H. Kauffman}
\address{Department of Mathematics, Statistics and Computer Science, 851 South Morgan Street, University of Illinois at Chicago,
Chicago, Illinois 60607-7045} 
\address{International Institute for Sustainability with Knotted Chiral Meta Matter (WPI-SKCM2), Hiroshima University, 1-3-1 Kagamiyama, Higashi-Hiroshima, Hiroshima 739-8526, Japan}
\email{kauffman@uic.edu}

\author{Ben McCarty}
\address{Department of Mathematical Sciences, University of Memphis,
Memphis, TN}
\email{ben.mccarty@memphis.edu}

\subjclass{}
\date{}

\begin{abstract} 
In 1976, George Spencer-Brown announced a proof of the four color theorem, using operations on Tait colorings for trivalent plane graphs. In subsequent work he formulated these operations in terms of an algorithm that he called a parity-pass and claimed that when the parity pass algorithm is performed on a non-polar pentagon region, it necessarily terminates in an edge coloring that is extendable to the entire graph.  We provide here a counterexample to show that this claim is false.  We then raise questions related to the existence of this sort of counterexample.
\end{abstract}

\maketitle

\section{Introduction}

First proposed in 1852, the four color theorem stood for about 125 years as a conjecture that inspired the work of many mathematicians.  Its first proof, by Appel and Haken in 1976 \cite{AppelHaken,AppelHaken2} left some in the mathematics community disappointed, due in large part to the massive computer case-checking required.  Around the same time, George Spencer-Brown announced a very different, and crucially, human-checkable, proof of the four color theorem.  It took several years, but in 1980, Spencer-Brown submitted what would become the first account of his method with the Royal Society \cite{CAFPOM}.  This account was later expanded in the mid nineties to become Appendix 5 in his famous book \emph{Laws of Form} \cite{GSB}.

While Spencer-Brown proposed multiple proofs of the four color theorem, one of his main approaches and the one of interest herein is the \emph{parity pass}---a sequence of five modifications one can do repeatedly to a partially edge-colored graph.  He then attempts to show that this sequence produces a coloring that extends to the entire graph.  Spencer-Brown claimed in the early 1980s (cf. Theorem 23 of Appendix 5 in \cite{GSB}) that the parity pass constituted a constructive proof of the four color theorem.  However, around that time he became aware that his algorithm could in fact result in a non-terminating algorithm on the famous Errera map (cf. \cite{Errera,Kittell}), when performed at one of two polar pentagons in the map.  By polar, he meant a region around which the graph is radially symmetric.  At any other pentagon, he observed that the parity pass was successful in producing a $3$-edge coloring.  Spencer-Brown claimed that the non-terminating algorithm could only occur on a polar pentagon.  He argued that, aside from a few exceptional cases he could list, every map would have a non-polar pentagon.  This lead to Theorem 25 of Appendix 5 in \cite{GSB}, where he claims that as long as the pentagon at which the parity pass algorithm is performed is non-polar, the algorithm will succeed in finding an extendable coloring.  He did not, however, supply a proof of this theorem.  Therefore we refer to it as the \emph{Polar Conjecture}.  It is this conjecture for which we present a counterexample.  

Dating the conjecture precisely is difficult.  The second author was aware of it by 1991, and it seems likely that it originated sometime in the 1980s via notes that were circulated among various researchers, including the second author.  This makes the conjecture nearly 40 years old.  

Finding the counterexample was a nontrivial exercise in experimentation guided by intuition and the tools developed in \cite{BM-Color,BM-Reduce}, making it unsurprising that it took almost 40 years to find a counterexample.  For example, our main counterexample requires sixty steps for verification (cf. \Cref{sec:CX}).

The paper is organized as follows.  In \Cref{sec:ToEdges}, we discuss formations and how they relate to the edge coloring conventions used in this paper.  In \Cref{sec:ParityPass}, we discuss the parity pass algorithm.  \Cref{sec:CX} introduces the counterexample.  Finally, in \Cref{sec:Implications}, we discuss the implications of this counterexample for Spencer-Brown's proposed proof.

\section{Formations, Face Colorings, and Edge Colorings}\label{sec:ToEdges}
At the heart of Spencer-Brown's work on the four color theorem is the concept of a formation, which we define below.

\begin{definition}[cf. \cite{StateCalc,MapColoring,MapReform,ReformMap,GSB}]
A \emph{formation} is a finite collection of simple closed curves in the plane, each colored red or blue, satisfying:  
\begin{enumerate}
\item Curves of the same color are pairwise disjoint.
\item A red curve may intersect a blue curve along a finite number of arcs, where the intersections may involve a crossing or not.
\end{enumerate}
\end{definition}

Associated to a formation $F$, there is a well-defined trivalent graph $G(F)$ obtained by identifying the arcs where red and blue curves meet with edges, and the endpoints of those edges with vertices.  The remaining red and blue arcs, where no interaction occurs, form the remaining edges of $G(F)$.  

Given a trivalent graph $G(V,E)$ and a formation $F$ such that $G = G(F)$, a 3-face colored ribbon graph is constructed as follows.  At each intersection of red and blue curves, insert a twisted band if the curves cross, or an untwisted band otherwise. For edges corresponding to a single red or blue arc, replace the arc with a ribbon by adding a green arc on the opposite side, incorporating twists as necessary to maintain consistency. This construction is illustrated in \Cref{fig:FormToRG} and further described in \cite{BM-Reduce}.  Let such a ribbon graph be called a \emph{state graph} with a 3-face coloring (cf. \cite{BM-Color}).

\begin{figure}[h]
\includegraphics[scale=0.5]{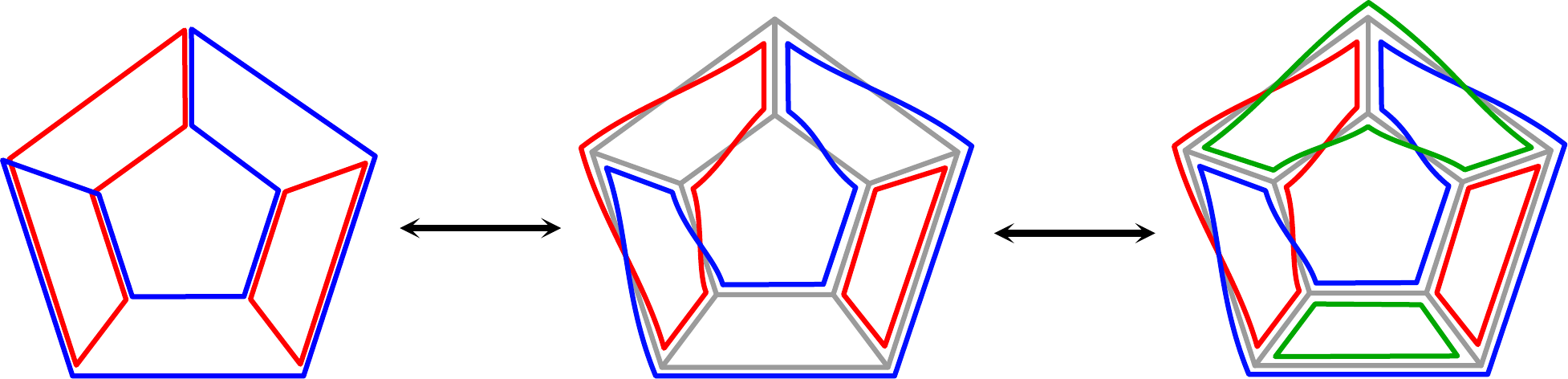}
\caption{The relationship between formations and $3$-face colorings of a ribbon graph.  The underlying trivalent graph, is shown in grey.}
\label{fig:FormToRG}
\end{figure}

Given a $3$-face coloring of a state graph, one can construct an associated $3$-edge coloring as follows.  For an edge of the graph incident to faces colored $c_1$ and $c_2$ (with $c_1 \neq c_2$), assign the remaining color $c_3$ to the edge.  This assignment corresponds to the group operation in the Klein $4$-group, where each color is assigned to one of the three nontrivial elements.  For the $3$-face coloring in \Cref{fig:FormToRG}, the corresponding $3$-edge coloring is given in \Cref{fig:ThreeEdge}.

\begin{figure}[H]
\includegraphics[scale=0.5]{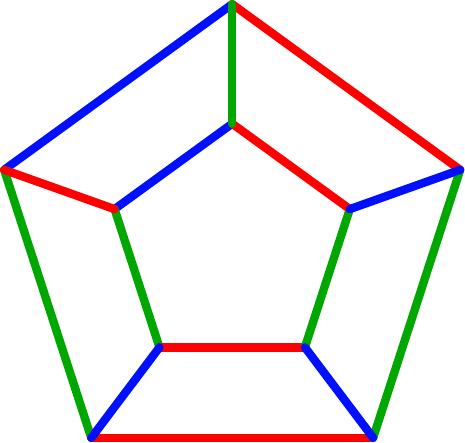}
\caption{The $3$-edge coloring associated to the $3$-face coloring in \Cref{fig:FormToRG}}
\label{fig:ThreeEdge}
\end{figure}

There is another natural way to construct an edge coloring of the graph from a formation.  Edges corresponding to intersections of red and blue curves are colored green, while edges corresponding to a single red or blue arc inherit that arc's color. This coloring corresponds to the 3-edge coloring derived from the 3-face coloring, up to a permutation of red and blue colors across the graph.

In Spencer-Brown's work~\cite{GSB}, formations underpin his algorithmic approach to the four-color theorem. For our purposes, however, working with 3-edge colorings simplifies connections to 3-face colorings of ribbon graphs, and allows us to describe Spencer-Brown's algorithmic approach to the four color theorem in the language of Tait switches. Accordingly, we adopt 3-edge colorings as our primary framework and, in the next section, present an equivalent formulation of Spencer-Brown's algorithm in these terms.

\section{The Parity Pass}\label{sec:ParityPass}

In a potential minimal counterexample to the four-color theorem, Kempe's results imply that there must be a face with at most five sides.  While Kempe's methods successfully handle faces with fewer than five sides, the pentagon requires a new approach. To that end, Spencer-Brown introduces the \emph{parity pass} algorithm, which operates on a partial 3-edge coloring of the graph where all but one edge of the pentagon is colored, termed a \emph{1-deficient} coloring. The algorithm proceeds by applying a sequence of Kempe-like switches, each involving an exchange of two alternating colors along a path outside the local pentagon region, where edges of the pentagon may be colored or uncolored as a result of the process.

The parity pass algorithm is illustrated in \Cref{fig:ParityPass} (cf. \cite{ReformMap}). The first figure in the top left depicts the 1-deficient coloring around the pentagon, with the uncolored edge indicated by a dotted line. The rest of the graph is assumed to have a 3-edge coloring consistent with this local configuration.

\remark Suppose one could find a minimal counterexample to the four-color theorem with a pentagon face. Removing one edge of the pentagon, as depicted in the top left of \Cref{fig:ParityPass}, yields a graph that must admit a 3-edge coloring. One might question why the local coloring around the pentagon must match the configuration shown in \Cref{fig:ParityPass}. In truth, it need not be identical.  However, if the local coloring differs from this (up to color permutation), it can be shown that the coloring can be extended to the entire graph, possibly after doing one or two simple two-color switches outside the pentagon. We omit the proof here but assume this local coloring as the starting point, as it represents, up to color permutation, the starting point for Spencer-Brown's parity pass algorithm (cf.~\cite{ReformMap,GSB}).

\begin{figure}[H]
\includegraphics[scale=0.7]{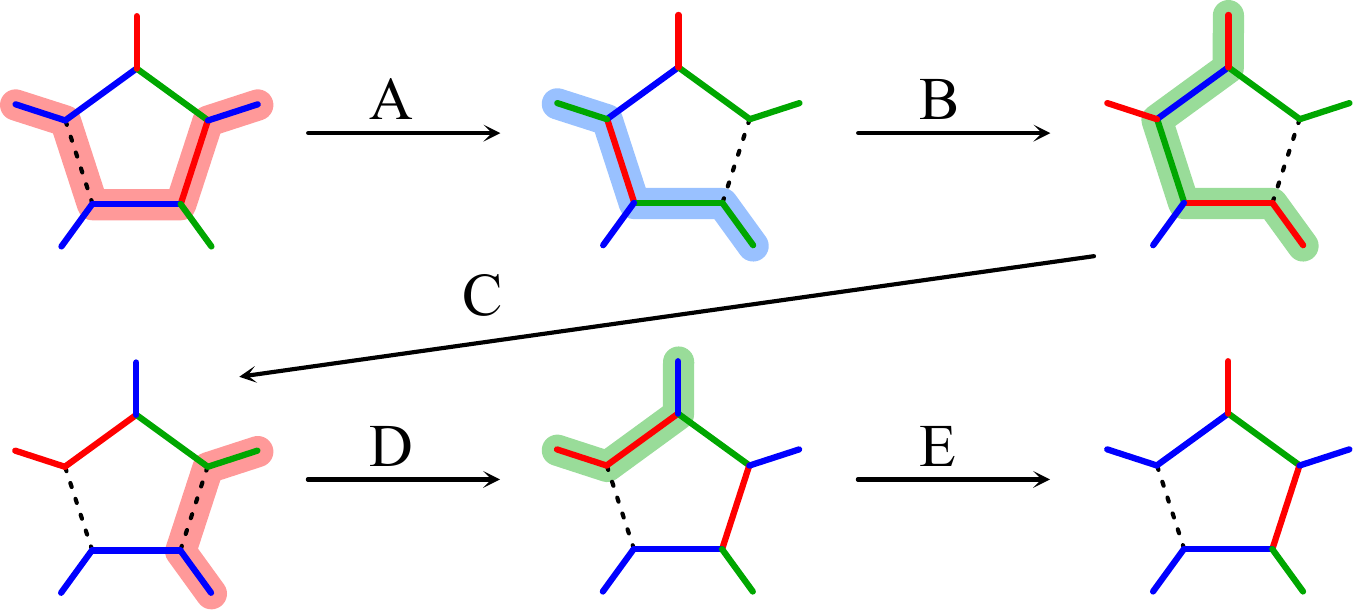}
\caption{One complete loop of the parity pass algorithm.}
\label{fig:ParityPass}
\end{figure}

In each diagram of \Cref{fig:ParityPass}, a highlighted edge path indicates the type of alternating two-color path followed outside the local pentagon region, using the third color for the highlight. Specifically, a red highlight corresponds to a blue-green path, a blue highlight to a red-green path, and a green highlight to a red-blue path. The algorithm assumes that, at each step, the partially colored graph outside the local region contains the necessary alternating edge-color path from one endpoint of the highlighted path to the other. If such a path does not exist, Spencer-Brown asserts that the coloring at the current step can be completed using simple operations, like a Tait switch as in step B of \Cref{fig:ParityPass}, to produce a 3-edge coloring that directly extends to the entire graph. His assertion is true, but we do not prove it here, as it is not required for the analysis of our counterexample.  Certain steps in the algorithm switch the parity of the number of alternating color paths, and others do not.  See \cite{ReformMap,CAFPOM} for details.

After performing the five switches (steps A--E) in \Cref{fig:ParityPass}, the local edge coloring around the pentagon returns to its initial configuration. However, the edge coloring outside this local region may differ significantly from the starting coloring. If the 3-edge coloring cannot be extended to the entire graph at this point, the algorithm restarts from step A. In some cases, the algorithm terminates quickly, yielding a coloring extendable to the entire graph in only a few steps. In others, multiple iterations of the five switches may be required to produce a coloring that extends to the pentagon (cf.~\cite{ReformMap,GSB}; see also \Cref{ex:LoopThenStop}).

A key question is whether there exists a graph with a $1$-deficient 3-edge coloring where the parity pass algorithm enters a non-terminating loop. In such a scenario, at each step, the coloring cannot be completed by a simple operation, and the required alternating edge path exists to proceed to the next step. Since there are only a finite number of possible $1$-deficient $3$-edge colorings, such an example would, after some number of iterations return to the original coloring, not just locally, but over the entire graph.

\begin{example}
\Cref{fig:Errera} shows a $1$-deficient coloring of the Errera graph where the parity pass algorithm applied at the central pentagon, does not terminate.  We leave this as an exercise for the reader.  This example was known to Spencer-Brown in the 1980s and led to the Polar Conjecture, as the central pentagon is indeed polar. 
\begin{figure}[H]
\includegraphics[scale=0.5]{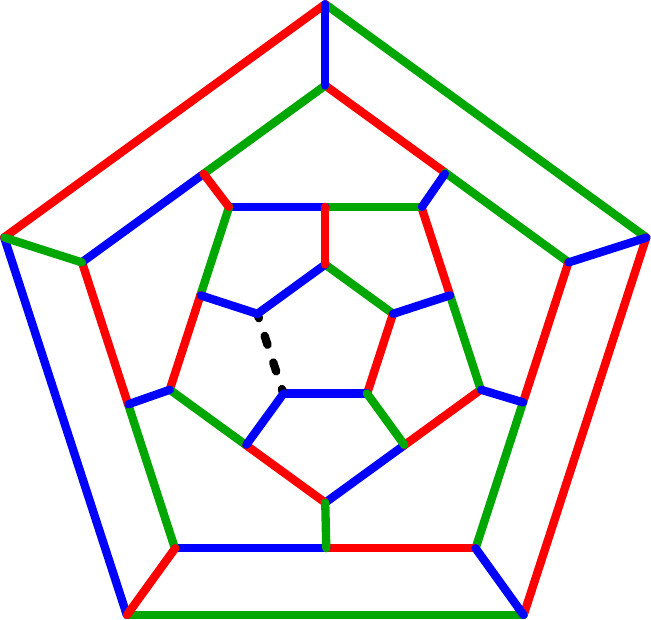}
\caption{A $1$-deficient coloring of the Errera graph, which exhibits non-terminating behavior under the parity pass algorithm.}
\label{fig:Errera}
\end{figure}

\end{example}

We next introduce a configuration that will be of use in analyzing the graph at each stage of the parity pass algorithm.


\subsection{A bad configuration}
\Cref{fig:BadCap} depicts a configuration in a planar graph that prevents a partial $3$-edge coloring from being extended to a pentagon through simple color switches on paths that begin and end at the pentagon.  Throughout this section, we assume $\Gamma$ is a plane graph which contains a pentagonal face, with the edges incident to that pentagonal face termed \emph{spokes}.  The highlighted paths in \Cref{fig:BadCap} are intentionally ambiguous to reflect their variable routes through the graph across different examples.  The important features to recognize are that the alternating color paths begin and end at the spokes as shown, and that the two alternating color paths (the red and green highlights) interact---meaning that they overlap in at least one edge (for the pictured example, they would interact along blue edges).  This is, of course, the edge coloring version of the famous Kempe chain problem.

\begin{figure}[H]
\includegraphics[scale=0.7]{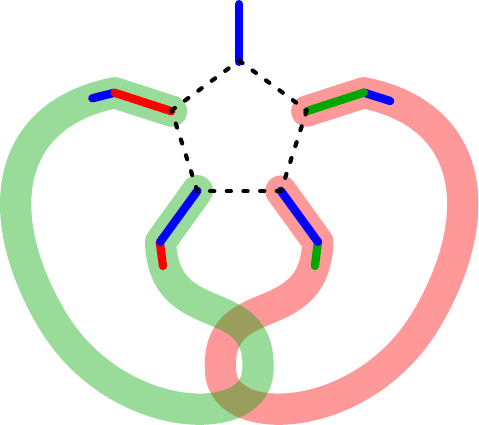}
\caption{A bad configuration, $\mathcal{C}$.  }
\label{fig:BadCap}
\end{figure}

\begin{theorem}\label{thm:NoSimpleOp}
Let $\Gamma$ be a plane graph with a pentagonal face and a $1$-deficient $3$-edge coloring.  Suppose that the coloring outside of the pentagon matches configuration $\mathcal{C}$ (see \Cref{fig:BadCap}) up to color permutation or basic symmetry.  Then the coloring cannot be extended to the pentagon, even after applying a simple two color switch that begins at the pentagon.
\end{theorem}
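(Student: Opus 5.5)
We will argue directly from the local picture in \Cref{fig:BadCap}, working up to color permutation and basic symmetry. Label the five spokes $s_1,\dots,s_5$ in cyclic order, with the uncolored pentagon edge between the boundary vertices at $s_1$ and $s_2$. The colors on the spokes and on the four colored pentagon edges are fixed by configuration $\mathcal{C}$.

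The first step is to record why the coloring does not extend as it stands. At each boundary vertex of the pentagon, the spoke color together with the two pentagon edges must use all three colors. So for the single uncolored edge to be colorable, the two colors missing at its endpoints must agree. In $\mathcal{C}$, the colors missing at the two ends of the deficient edge are different. Hence no direct extension exists.

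Next we enumerate the simple two-color switches that begin at the pentagon. Such a switch is determined by a spoke $s_i$ and a pair of colors $\{a,b\}$ with $a$ the color of $s_i$. The Kempe path is the maximal $a$--$b$ alternating path starting at $s_i$ outside the pentagon. In a trivalent graph this is a path or cycle component of the $\{a,b\}$-subgraph. Since it starts at a spoke, it is a path, and its other end is either another spoke or a pentagon vertex. Switching it changes the color of $s_i$ and of the far endpoint spoke, and leaves every other spoke unchanged.

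We then use the data of $\mathcal{C}$ to determine the other end of each relevant path. The red- and green-highlighted alternating paths in $\mathcal{C}$ connect specified pairs of spokes, for example $s_1$ to $s_3$ and $s_2$ to $s_4$. We then check the remaining cases one at a time:
\begin{enumerate}
\item A switch on a path that is not highlighted in $\mathcal{C}$ either changes only a spoke whose new color still leaves the two endpoint-missing colors distinct, or it would have to cross a highlighted chain.
\item A switch on a highlighted path changes both of its endpoint spokes simultaneously. Checking the spoke colors in $\mathcal{C}$ shows the two missing colors at the deficient edge remain different, because both ends of that edge are affected symmetrically, or neither is.
\end{enumerate}

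The key step, and the main obstacle, is the interaction hypothesis. Because the red- and green-highlighted chains share at least one edge (a blue edge in the pictured case), performing a switch along one chain destroys or reroutes the other. Consider a switch at $s_1$ or $s_2$ in the color pair that would fix the deficient edge. Without the shared edge, that switch would follow an isolated chain to a harmless spoke. Because of the overlap, we will show the relevant $\{a,b\}$-component is exactly one of the highlighted chains, and it therefore terminates at the spoke that re-creates the mismatch. This is precisely the edge-coloring form of Kempe's entangled-chain obstruction.

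Planarity enters here through the Jordan curve theorem: an alternating path from $s_1$ to $s_3$ separates $s_2$ from $s_4,s_5$. We will use this to rule out every other endpoint for the chains emanating from $s_2$, $s_4$ and $s_5$. Once each case is confirmed to leave the deficient edge uncolorable, the theorem follows.
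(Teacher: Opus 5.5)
There is a genuine gap, and it sits exactly where the paper's proof does its work.

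\textbf{The extension criterion.} Your test for ``extends'' is too weak. You only ask whether the one deficient edge can be colored, given the current colors of the other four pentagon edges. But a switch along a chain that runs from spoke to spoke changes two spoke colors, and these then clash with the pentagon edges already colored at those vertices. So after a switch, the only meaningful question is whether the new spoke colors admit \emph{any} coloring of the whole pentagon boundary.

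That is precisely what \Cref{lem:NoExtend} isolates. It erases all five pentagon edges and shows that the spoke pattern of $\mathcal{C}$ cannot be completed. In the figure's colors, the green spoke and the red spoke are each flanked by blue spokes. This forces both pentagon edges at the green-spoke vertex to be red, and both at the red-spoke vertex to be green. You never prove a statement of this kind, either for $\mathcal{C}$ or for the patterns produced by the switches. Your test ``the two missing colors remain different'' is not even defined once a spoke at a fully colored pentagon vertex has changed.

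In fact this spoke pattern admits no proper coloring of any four consecutive pentagon edges, since one of the two flanked vertices is always interior to such a path. So a proper partial coloring with two distinct missing colors is not the right model for $\mathcal{C}$ at all. Likewise, a maximal alternating path from a spoke stops at the far spoke only once the boundary is erased; otherwise it continues along pentagon edges.

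\textbf{The enumeration.} This is the heart of the proof, and you leave it as assertions, some of them incorrect. The needed fact is elementary. With the boundary erased, for each pair of colors the two-colored chains outside the pentagon match up the spokes carrying those two colors. So there are exactly five chains: two blue--green, two red--blue, and one red--green. Once the two highlighted chains of $\mathcal{C}$ are known, the remaining three pairings are forced by counting. One then checks that each of the five switches yields a rotation or recoloring of the same bad spoke pattern, and invokes \Cref{lem:NoExtend}. That is the paper's proof.

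Your item (1) does not do this. A spoke-to-spoke switch always changes two spokes, not one. Chains of \emph{different} color pairs may freely cross; only chains of the same pair must be disjoint. Item (2) is unsupported. Also, which spokes the highlighted chains join is essential, so it cannot be chosen ``for example.'' If the blue--green chain from the green spoke ended at the blue spoke lying between the green and red spokes, a single switch would succeed.

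\textbf{The ``key step.''} The step you call key is misdirected. The shared blue edge of the two highlighted chains plays no role in the single-switch statement, which follows from the pairings alone. The overlap only matters for excluding the double switch of \Cref{ex:DoubleSwitch}. In any case, your claim that the overlap forces ``the relevant component'' to be a highlighted chain is never argued.
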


Before proving the theorem we prove the following lemma.

\begin{lemma}\label{lem:NoExtend}
Let $\Gamma$ be a plane graph with a pentagonal face.  Suppose that the edges of the graph are properly colored with three colors, except at the boundary of the chosen pentagon, which are left uncolored.  Suppose further that the spokes are colored as in \Cref{fig:BadCap}.  Then the coloring cannot be directly extended to the pentagon.
\end{lemma}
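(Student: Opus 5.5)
The plan is to reduce the statement to a finite, local check on the pentagon, because extending the coloring only involves the five pentagon edges. Label the vertices of the pentagon $v_1,\dots,v_5$ in cyclic order. Let $s_i$ be the spoke at $v_i$ and $c_i$ its color as prescribed by \Cref{fig:BadCap}. Let $e_i$ be the pentagon edge joining $v_i$ to $v_{i+1}$, with indices taken mod $5$. An extension is a choice of colors for $e_1,\dots,e_5$ such that at each $v_i$ the three colors of $e_{i-1}$, $e_i$ and $s_i$ are distinct. The coloring of $\Gamma$ away from the pentagon plays no role in this step: the alternating paths of $\mathcal{C}$ are needed only later, for \Cref{thm:NoSimpleOp}. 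So the lemma amounts to showing that the cyclic spoke sequence $(c_1,\dots,c_5)$ of $\mathcal{C}$ admits no such assignment.

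First I will apply a parity obstruction, since it may settle the question at once. Consider the cut separating the pentagon from the rest of the graph, which consists exactly of the five spokes. In any proper $3$-edge coloring of a cubic graph, the edges of any two colors form disjoint cycles. Each such cycle crosses a closed cut an even number of times, so $n_a+n_b$ is even for every pair of colors, where $n_x$ is the number of spokes of color $x$. Since $n_r+n_b+n_g=5$ is odd, every $n_x$ must be odd, so the spoke color counts must be $(3,1,1)$ up to permutation. If the spokes in \Cref{fig:BadCap} have counts $(2,2,1)$ up to permutation, the lemma follows immediately.

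If the spoke counts do satisfy the parity condition, I will argue by forced propagation around the pentagon. Whenever $c_i\neq c_{i+1}$, the edge $e_i$ must receive the unique color different from both, so $e_i$ is forced. Whenever $c_i=c_{i+1}$, the edge $e_i$ has two possible colors. In a $(3,1,1)$ pattern there are several adjacent pairs of distinct spoke colors, so most pentagon edges are forced. The aim is to exhibit a vertex $v_i$ whose two pentagon edges are both forced to the same color. For example, the pattern $(r,b,r,b,g)$ forces $e_1$ and $e_2$ to both be green at $v_2$. If instead a run of equal spokes leaves a free edge, the two free choices are constrained at both of that edge's endpoints. Checking both choices against the forced neighbors should produce a clash in each case. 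Since $\mathcal{C}$ is fixed up to color permutation and the dihedral symmetries of the pentagon, this comes down to one short check.

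The main obstacle is not conceptual but lies in pinning down the spoke sequence exactly from \Cref{fig:BadCap}, including its cyclic order. The parity count alone does not distinguish the cyclic arrangements of a $(3,1,1)$ pattern, and some of those arrangements (such as $(r,r,r,b,g)$) do extend to the pentagon. So the argument must use the specific arrangement in the figure. I would first read off the sequence, then attempt the parity argument, and fall back on the edge-by-edge propagation above if parity is satisfied.
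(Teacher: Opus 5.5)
Your forced-propagation step is exactly the paper's argument: in $\mathcal{C}$ the two spoke colors that occur only once are non-adjacent, so up to symmetry the spokes read $(b,r,b,g,b)$. Both pentagon edges at the red-spoked vertex therefore lie between a red and a blue spoke and are forced green, which is the clash the paper points to.

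The parity step, however, can never settle anything here. Under the lemma's hypothesis, each two-colored subgraph outside the pentagon is a union of cycles and of paths ending at spokes, so the outside coloring already forces spoke counts $(3,1,1)$. A $(2,2,1)$ pattern such as your $(r,b,r,b,g)$ therefore cannot occur. Likewise, the ``figure-reading'' obstacle is only a two-case check: the one other $(3,1,1)$ arrangement up to symmetry, $(b,b,b,r,g)$, extends.
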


\begin{proof}
Consider the configuration $\mathcal{C}$.  Between a blue and green spoke there must be a red edge in the pentagon, and between a red and blue spoke there must be a a green edge.  Attempting to color in this way necessarily leads to a contradiction.
\end{proof}

We now prove \Cref{thm:NoSimpleOp}.

\begin{proof}
Given the coloring does not extend directly, by \Cref{lem:NoExtend}, there are only five possible two color switches to check that begin and end at spokes.  Each one results in a coloring of the spokes that is either equivalent to the one in \Cref{fig:BadCap} or is a rotation of it.  By symmetry, and \Cref{lem:NoExtend}, the coloring cannot extend to the pentagon.
\end{proof}

\begin{example}\label{ex:DoubleSwitch}
If one could do a color switch along both the red and green highlighted paths in \Cref{fig:BadCap}, the resulting $3$-edge coloring would extend to the pentagon as shown in \Cref{fig:DoubleSwitch}.  

\begin{figure}[H]
\includegraphics[scale=0.7]{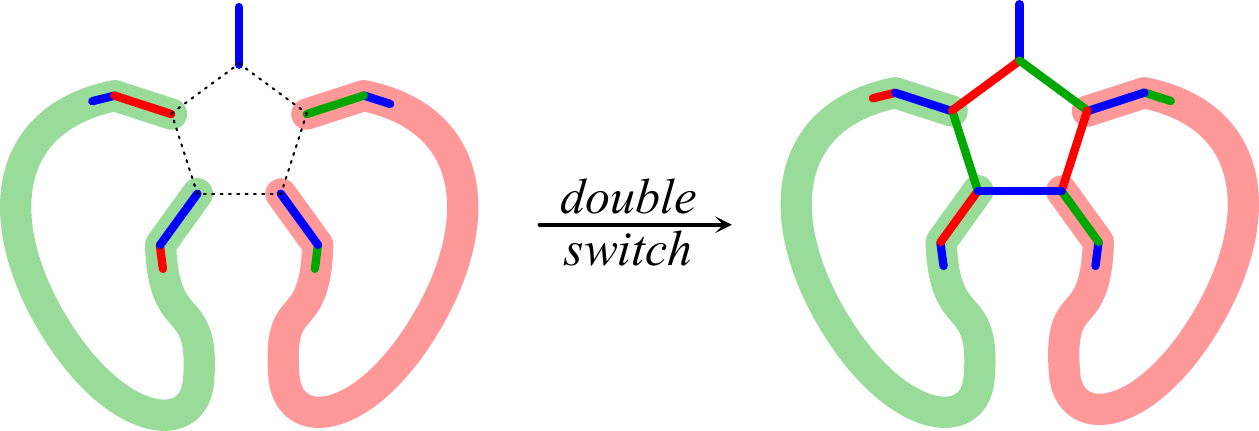}
\caption{A double switch allows the coloring to extend.}
\label{fig:DoubleSwitch}
\end{figure}

However, in configuration $\mathcal{C}$ the red and green highlighted paths, representing alternating two-color paths, share at least one blue edge, as indicated by the crossing in \Cref{fig:BadCap}.  Consequently, the double switch depicted in \Cref{fig:DoubleSwitch} may not be possible for a partially colored graph that contains $\mathcal{C}$.

\end{example}

Readers familiar with the four-color theorem will recognize configuration $\mathcal{C}$ as an instance of the classical Kempe chain problem, where two alternating two-color paths, intended for color switches, share at least one edge, preventing both switches from being performed simultaneously.

\Cref{thm:NoSimpleOp} is stronger than needed for the parity pass algorithm, asserting that even a non-simple switch (cf. ``simple operation'' in \cite{ReformMap,GSB}) is insufficient for obtaining a valid coloring of the graph, when configuration $\mathcal{C}$ is present in the graph.  Nevertheless this theorem provides a practical test at each step of the parity pass algorithm:  if configuration $\mathcal{C}$ is detected, the algorithm continues.  The next theorem says that at each stage of the algorithm, if configuration $\mathcal{C}$ is detected (possibly rotated), then the required alternating two-color path for the next step of the algorithm exists in the graph.

\begin{theorem}\label{thm:BadConfigPass}
For any step of the parity pass algorithm (see Figure~\ref{fig:ParityPass}), configuration $\mathcal{C}$, up to color permutation or rotation, is compatible with the local coloring of the pentagonal region. Moreover, when $\mathcal{C}$ is present, the graph contains the alternating two-color path required to perform the next step of the algorithm.
\end{theorem}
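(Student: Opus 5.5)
The plan is a finite case check over the five local configurations of the parity pass, one for each of steps A--E in Figure~\ref{fig:ParityPass}, carried out in two stages. Up to color permutation and the dihedral symmetry of the pentagon, each step has a fixed local picture: four colored pentagon edges, one uncolored edge, and the five spoke colors. Configuration $\mathcal{C}$ is determined by its spoke coloring together with two highlighted alternating paths joining specified pairs of spokes and sharing at least one edge. So both parts of the theorem reduce to statements about spoke colorings and about which pairs of spokes are joined by which bicolored chains.

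\textbf{Step 1 (compatibility).} For each of the five local colorings, I will read off the colors of the five spokes and check that, after a suitable rotation or reflection and a permutation of $\{r,b,g\}$, they agree with the spoke coloring of Figure~\ref{fig:BadCap}. The picture of $\mathcal{C}$ only depends on the spokes, and the region outside the pentagon is otherwise unconstrained. Hence agreement on spokes shows that $\mathcal{C}$ is compatible with that step's local region. Lemma~\ref{lem:NoExtend} confirms this is the relevant spoke pattern, since the step's coloring is one-deficient rather than extendable. I expect the spoke patterns at all five steps to coincide up to rotation, because the algorithm is built to cycle through rotations of one pattern. So this check should be mechanical.

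\textbf{Step 2 (existence of the next path).} Suppose $\mathcal{C}$ is present at some step. The next step calls for a specific two-color switch along a path between two specified spokes or pentagon endpoints, whose colors are indicated by the highlight in Figure~\ref{fig:ParityPass}. I will show that one of the two highlighted chains of $\mathcal{C}$, or a chain forced by it, is exactly that path. The main tool is the standard Kempe argument. In a proper $3$-edge coloring, every two-colored component is a path or a cycle, and its path components terminate only at the uncolored or deficient edges, which here are at the pentagon. Starting from a spoke of the required pair of colors, the bicolored component must therefore exit at another spoke. Since the chains of $\mathcal{C}$ cross, planarity (a Jordan curve argument) rules out every endpoint except the one the algorithm needs. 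Concretely, a chain from spoke $s_i$ must end at a spoke $s_j$. The chain of $\mathcal{C}$ with its crossing separates the plane outside the pentagon, so the other candidate endpoints lie in a region the new chain could only reach by crossing an edge it cannot share, because it would need a color that is not among its two.

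\textbf{Main obstacle.} The hard part is Step 2. In $3$-edge colorings, chains of different color pairs can share edges, unlike vertex Kempe chains. For example, a red--green chain and a red--blue chain may share red edges. So the separation argument has to be run with the pair of chains that truly cannot cross, namely those sharing no color. Where that fails, I will instead use a parity count: the number of spoke endpoints of each color pair on each side of the separating curve must be even. I would first try to make this rigorous at step A and then transport it by the rotational symmetry identified in Step 1. If the symmetry does not cover all steps, I would fall back to checking each step's highlighted path directly.
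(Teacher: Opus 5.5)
Your overall plan (a finite check over steps A--E, with compatibility read off from the spoke colors) agrees with the paper's. However, the mechanism you propose for Step 2 does not work as stated, and the simple idea that does work is missing.

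The crossing of the two highlighted chains of $\mathcal{C}$ plays no role in producing the next path; it is what blocks the double switch of Example~\ref{ex:DoubleSwitch}. A chain of a different color pair is also no barrier. An $\{X,Y\}$-chain can share, and even topologically cross, an $\{X,Z\}$-chain along $X$-edges, so the claim that the new chain would ``need a color not among its two'' to get past it is false. Your repair via ``the pair of chains sharing no color'' is vacuous, since with three colors any two distinct color pairs share a color. The only chains that are genuinely disjoint are different components of the \emph{same} two-colored subgraph. Once you use those, no Jordan curve argument is needed.

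The missing step is the pairing argument the paper uses:
\begin{enumerate}
\item Outside the pentagon every vertex meets all three colors. So for each pair $\{X,Y\}$, the $\{X,Y\}$-subgraph there is a disjoint union of cycles and paths, whose ends are exactly the $X$- and $Y$-colored spokes, matched in pairs.
\item By the parity count, the spoke colors occur $3,1,1$ times, so each color pair has four or two spokes.
\item With two spokes, the chain between them exists outright. With four, a chain of $\mathcal{C}$ of that same color pair joining two of them forces the other two to be joined.
\item Hence $\mathcal{C}$ determines the spoke matching for all three color pairs. The theorem reduces to checking, step by step, that the endpoints demanded by the next switch form one of these forced pairs.
\end{enumerate}
At step C this is precisely the paper's proof. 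The red--blue chain of $\mathcal{C}$ between the two left-hand spokes forces a red--blue chain from the top spoke to the lower-right spoke. That chain closes up through the pentagon into the loop along which step C switches.

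A minor inaccuracy: not every step's domain is $1$-deficient, since the coloring after step C is $2$-deficient. Because the relevant structure lives outside the pentagon, this affects neither part of the argument.
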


\begin{proof}
We demonstrate the argument for the configuration in the domain of step C of the parity pass algorithm (see \Cref{fig:ParityPass}), with other steps following similarly.  As shown in \Cref{fig:BadCapOnPent}, there must be a red-blue (highlighted green) path from the top-most spoke to the lower-right spoke.  The existence of this path is required since we know that there is already a separate red-blue path between the two lefthand spokes.  This results in a loop that allows step C to perform a red-blue switch along the arc, while also uncoloring the green edge.
\begin{figure}[h]
\includegraphics[scale=0.7]{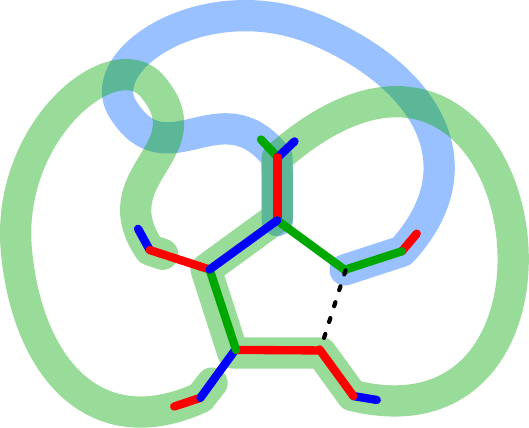}
\caption{Configuration $\mathcal{C}$ on the domain of step C of the parity pass.}
\label{fig:BadCapOnPent}
\end{figure}
\end{proof}

\Cref{thm:BadConfigPass} enables the following description of the parity pass algorithm.  Given a plane graph with a pentagonal face and a $1$-deficient $3$-edge coloring, if the bad configuration $\mathcal{C}$ is present, perform transformation A in \Cref{fig:ParityPass}.  If the bad configuration $\mathcal{C}$ persists, perform the next transformation in sequence.  Continue until the bad configuration is absent, or until the original $1$-deficient $3$-edge coloring (for the entire graph) is reached.  For our counterexample, this latter state is what we will see. 

\section{The Counterexample}\label{sec:CX}
In \Cref{fig:CX} we present the underlying graph $\mathcal{G}$ used to construct our main counterexample to the Polar Conjecture.  A counterexample to the Polar Conjecture requires a graph lacking radial symmetry around the pentagon where the parity pass algorithm is applied.  The chosen pentagon is highlighted in yellow.

The graph lacks radial symmetry about the highlighted pentagon as follows. Consider $\mathcal{G}$ embedded on the 2-sphere, with the pentagon at one pole. The graph’s single highlighted octagonal face must then lie at the opposite pole. Any rotational symmetry fixing both the pentagon and the octagon must have an order dividing both 5 and 8, which have no common nontrivial divisors. Thus, no nontrivial rotational symmetry exists.

\begin{figure}[H]
\includegraphics[scale=0.5]{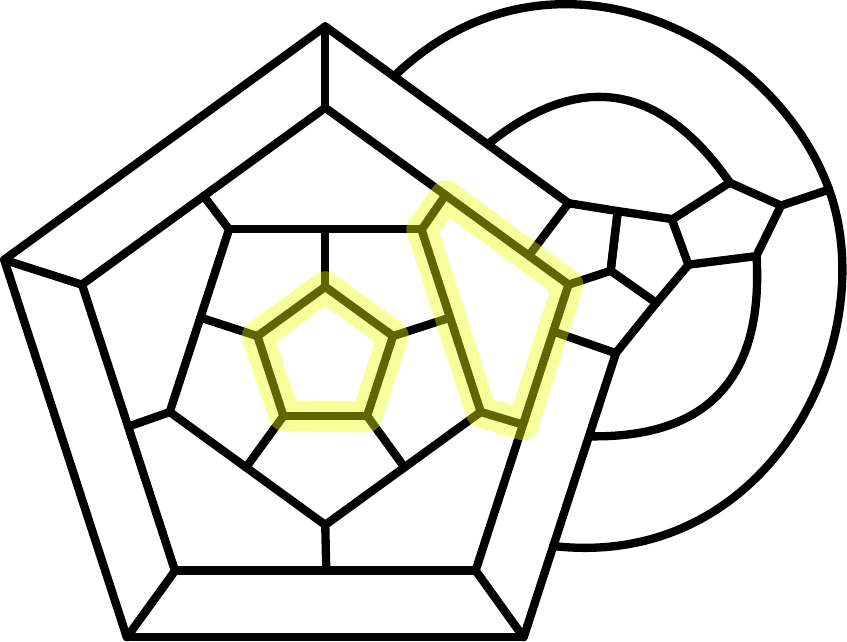}
\caption{The graph $\mathcal{G}$ for our counterexample.}
\label{fig:CX}
\end{figure}

\remark The graph $\mathcal{G}$ exhibits reflection symmetry, but the Polar Conjecture requires rotational symmetry, not merely reflective symmetry.  For a counterexample that lacks reflective symmetry as well, see \Cref{ex:NoReflection}.

\Cref{fig:CXFirstPass} shows one complete parity pass applied to a $1$-deficient coloring of $\mathcal{G}$.  Observe that the first partial coloring in the sequence exhibits the bad configuration, $\mathcal{C}$, and in each subsequent step it persists (see \Cref{fig:BadCapInPass} for the first and last steps).  

The coloring after step E is locally identical to the initial coloring, however, the colors outside the pentagon and its spokes are quite different.  Since the parity pass algorithm operates locally, it can be reapplied.  The reader is encouraged to try this and observe that the bad configuration appears in each step through twelve complete cycles, or sixty steps, before finally returning to the original edge coloring over the entire graph.  Thus, this example invalidates the Polar Conjecture.  

\begin{figure}[h]
\includegraphics[scale=0.45]{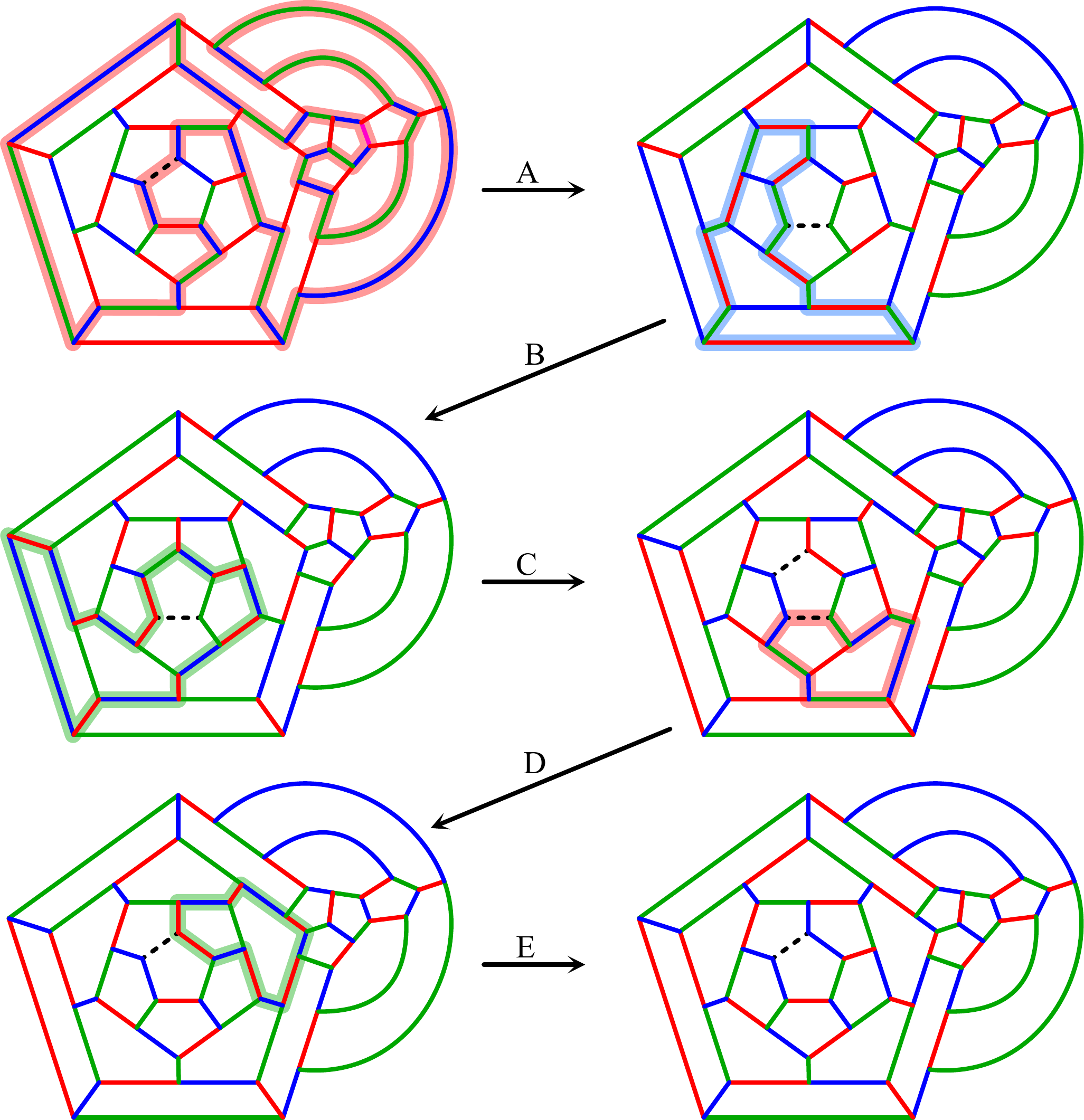}
\caption{One complete parity pass on the counterexample.}
\label{fig:CXFirstPass}
\end{figure}

\begin{figure}[h]
\includegraphics[scale=0.45]{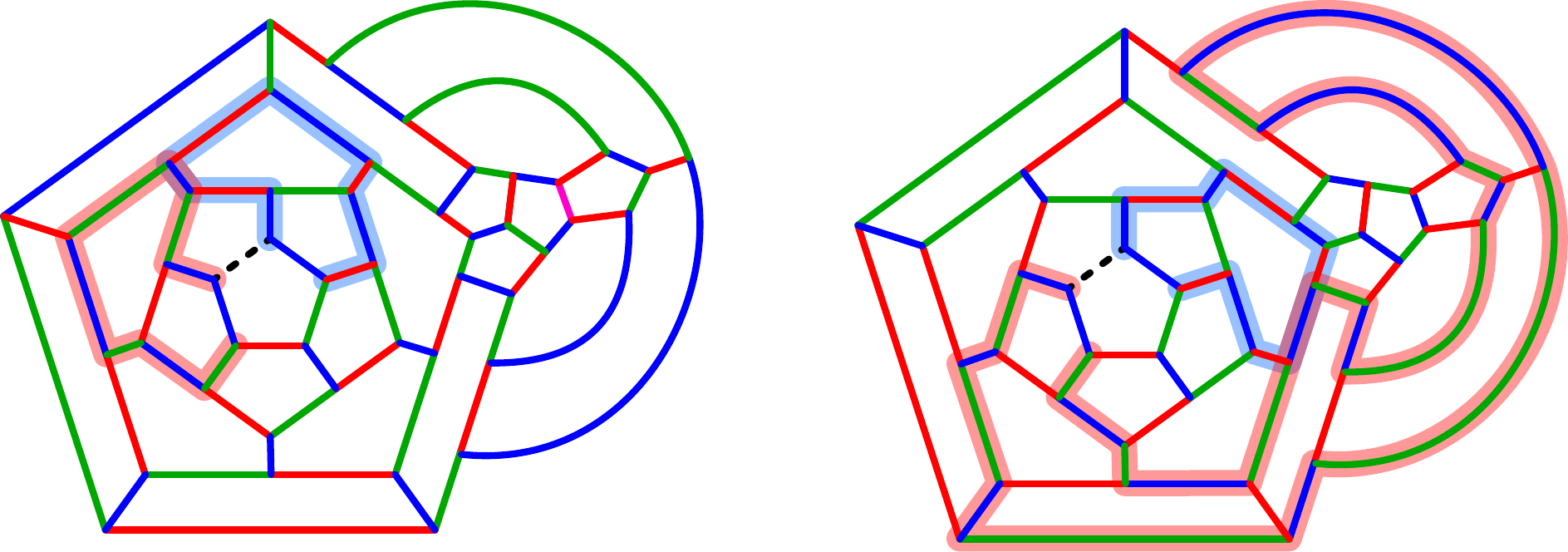}
\caption{The configuration $\mathcal{C}$ before and after one complete parity pass.}
\label{fig:BadCapInPass}
\end{figure}

\begin{example}\label{ex:NoReflection}
\Cref{fig:NoReflection} depicts a $1$-deficient coloring of a graph derived from our counterexample $\mathcal{G}$ by doing an I-H move on the yellow highlighted edge.  This move introduces a four-sided face which can be addressed using standard Tait or Kempe switch methods.  
However, the graph retains the bad configuration $\mathcal{C}$ (see \Cref{fig:BadCap}). When the parity pass algorithm is applied, it exhibits looping behavior, similar to our main counterexample $\mathcal{G}$, completing four full cyles (20 steps) before returning to the original $1$-deficient coloring. 

\begin{figure}[h]
\includegraphics[scale=0.5]{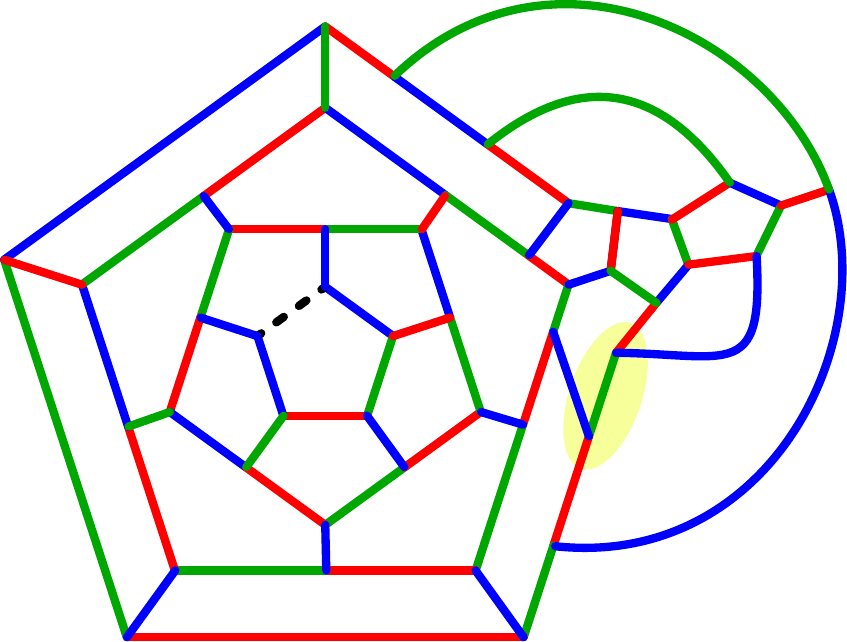}
\caption{A counterexample without a line of symmetry.}
\label{fig:NoReflection}
\end{figure}

\end{example}

\begin{example}\label{ex:LoopThenStop}
\Cref{fig:FiniteLoopEx} depicts a $1$-deficient coloring of a graph derived from the graph in \Cref{ex:NoReflection} by inserting an edge to split the lower-rightmost face (excluding the outer region) and adjusting the edge coloring accordingly.  Two observations follow.  First, the bad configuration $\mathcal{C}$ is present in this coloring.  Second, applying the parity pass algorithm, the first two complete cycles (ten steps) retain some form of configuration $\mathcal{C}$.  However, after the first step of the third cycle (the A step), $\mathcal{C}$ is absent, and the coloring is extendable (i.e. completable by simple operations in the language of \cite{ReformMap,GSB}).  

\begin{figure}[h]
\includegraphics[scale=0.5]{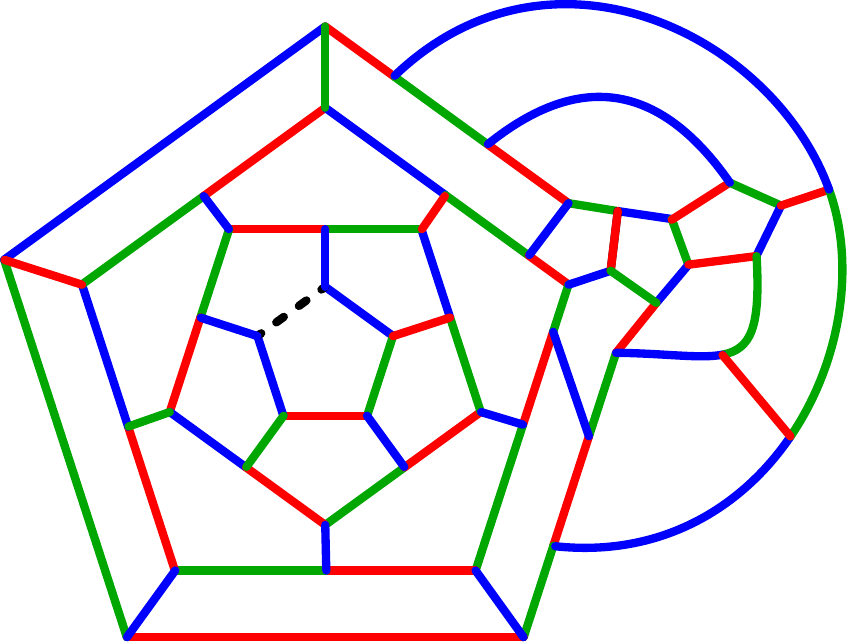}
\caption{An example where the parity pass loops, but terminates.}
\label{fig:FiniteLoopEx}
\end{figure}

This example illustrates the subtlety of the looping behavior in the parity pass algorithm. To detect a non-terminating loop, it is necessary to continue the algorithm until the initial $1$-deficient $3$-edge coloring of the entire graph is repeated while noting that the bad configuration $\mathcal{C}$ persists in each step.
\end{example}

\section{Implications for Spencer-Brown's proposed proof}\label{sec:Implications}
Our counterexample demonstrates that the Polar Conjecture is false for the parity pass algorithm used in the proof of Theorem 23 in Appendix 5 of \cite{GSB}.  However, the counterexample does not disprove Spencer-Brown's broader claim that, in any potential counterexample to the four-color theorem, there exists some pentagonal face where the parity pass algorithm produces a valid 3-edge coloring.  Unfortunately, if the algorithm does not work for an arbitrary pentagon, or even an arbitrary non-polar pentagon, then one is propelled into a search across all pentagons in the map---thus, killing the effectiveness of such an algorithm.

Theorem 24 in Appendix 5 of~\cite{GSB} asserts (without proof) that, for any non-polar pentagonal face, there exists a sequence of ``color exchanges'' that yields a 3-edge coloring extending to the pentagon. This allows greater flexibility than the fixed sequence of transformations in the parity pass algorithm. In our main counterexample graph $\mathcal{G}$, the initial 1-deficient 3-edge coloring is, in fact, completable via a sequence of simple operations, as is the Errera graph, which we leave as an exercise for the reader.  This broader claim is not addressed here, and remains open.  

The ability to construct non-symmetric graphs where configuration $\mathcal{C}$ (see \Cref{fig:BadCap}) persists through Spencer-Brown’s sequence of transformations might raise reasonable doubts about whether the four-color theorem can be proven using the pentagonal face, as in Kempe’s approach. However, several open questions merit exploration.

\begin{question}\label{ques:AllPentagonsLoop}
Do there exist non-symmetric plane graphs with non-polar pentagons, such that every pentagonal face admits a 1-deficient 3-edge coloring that exhibits looping behavior under the parity pass algorithm?
\end{question}

An even stronger counterexample to the polar conjecture would require the above property for all pentagonal faces; otherwise, the parity pass applied to a pentagon without looping would produce a valid coloring. Constructing such a graph appears challenging, suggesting an intermediate question:

\begin{question}\label{ques:TwoPentagonsLoop}
Do there exist non-symmetric plane graphs with at least two non-polar pentagonal faces, each admitting a 1-deficient 3-edge coloring that exhibits looping behavior under the parity pass algorithm?
\end{question}

Finally, we note that Spencer-Brown gave other algorithms as well that he described as ``parity mills."    Elsewhere in a modified version of \cite{CAFPOM} that circulated in the 1980s, and also described in the 1990s in \cite{GSB2,GSB3}, he gives two parity mills that he calls $\phi_1$ and $\phi_2$ which are pictured in \Cref{fig:AltPass1,fig:AltPass2}.  In testing these two parity mills on our counterexample we used the first three transformations of the parity pass to obtain a $2$-deficient coloring which is required for the beginning of the parity mills (see the coloring after transformation C in \Cref{fig:CXFirstPass}).  Then, applying the parity mills, we found that $\phi_1$ terminates in a $2$-deficient coloring that can be converted to a valid coloring by the double switch of \Cref{fig:DoubleSwitch}.  However, $\phi_2$ is non-terminating, just like the parity pass.

\begin{figure}[h]
\includegraphics[scale=0.7]{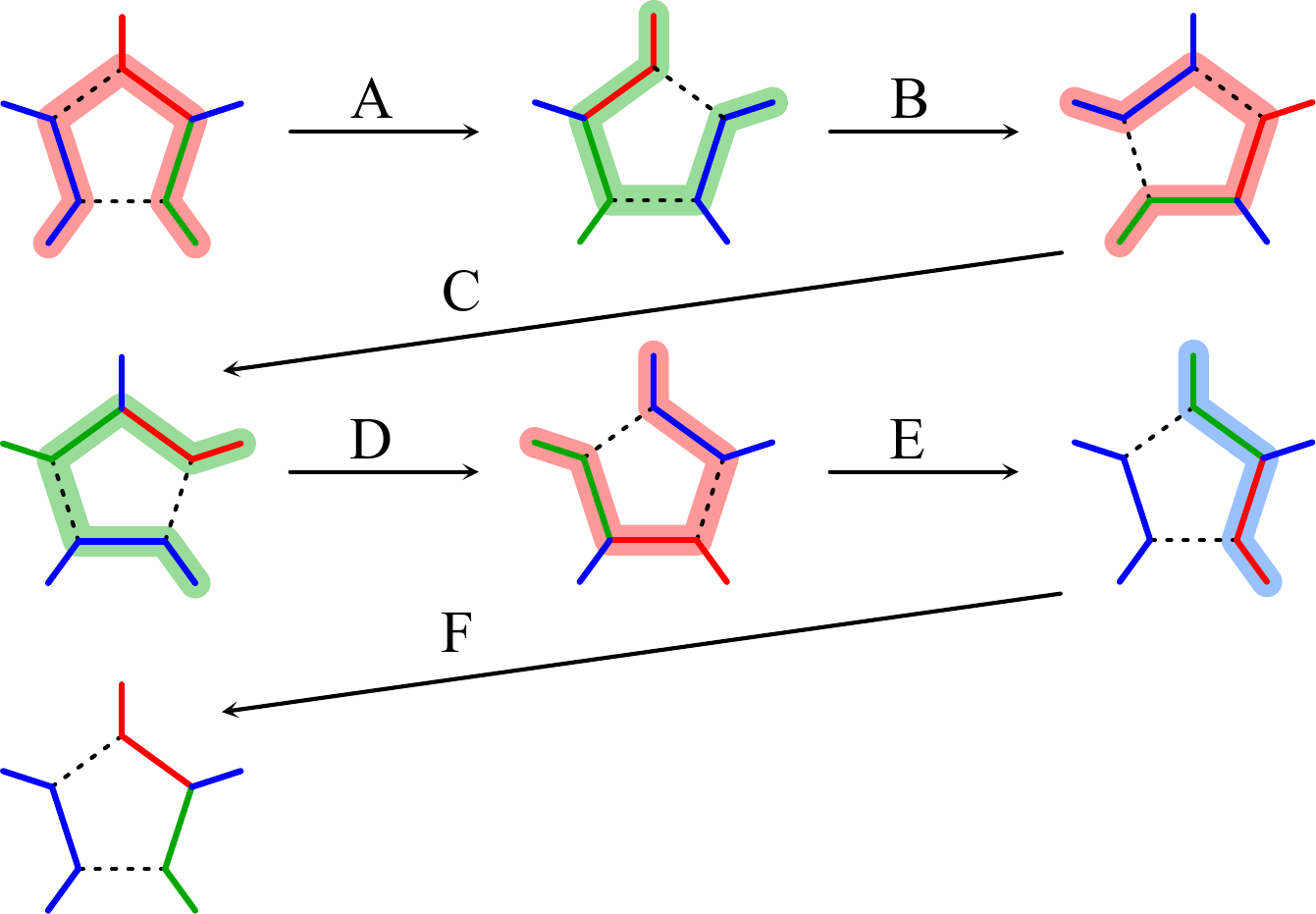}
\caption{The parity mill $\phi_1$.}
\label{fig:AltPass1}
\end{figure}

\begin{figure}[h]
\includegraphics[scale=0.7]{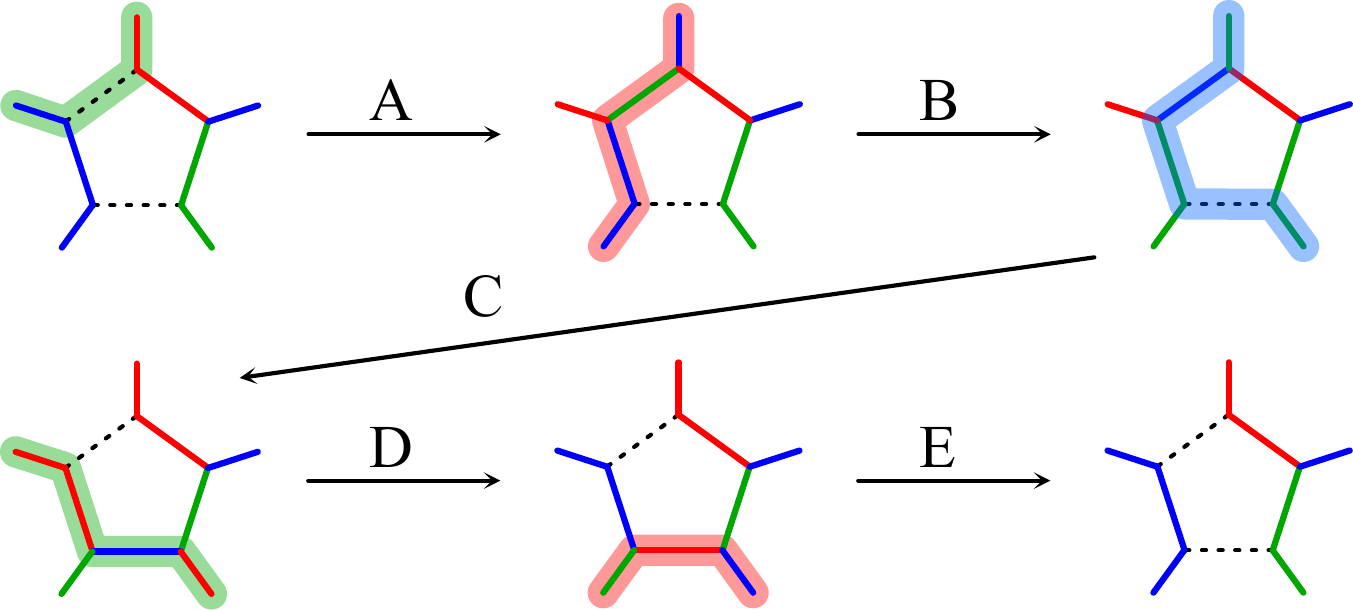}
\caption{The parity mill $\phi_2$.}
\label{fig:AltPass2}
\end{figure}

Suppose that one is equipped with the parity pass in \Cref{fig:ParityPass} as well as the two parity mills $\phi_1$ and $\phi_2$ shown in \Cref{fig:AltPass1,fig:AltPass2}. As previously noted, our counterexample works for two of the  algorithms but not the third. One might naturally ask the following question, which is currently open.

\begin{question}\label{ques:AllPassesLoop}
Do there exist non-symmetric plane graphs with a non-polar pentagon for which all three algorithms, the parity pass, and the two parity mills, exhibit looping behavior?
\end{question}

These algorithms are based on moves from the impasse group of \cite{Kittell}.  Thus there is some hope that the impasse group provides sufficient flexibility to find a coloring.

\end{document}